\newcommand{\kod}{\textnormal{kod}}
\newcommand{\dbar}{\overline{\partial}}
\newcommand{\ddt}[1]{\frac{\partial #1}{\partial t}}
\newcommand{\ddbar}{\sqrt{-1}\partial\dbar}
\newtheorem{theorem}{Theorem}[section]
\newtheorem{proposition}{Proposition}[section]
\newtheorem{lemma}{Lemma}[section]
\newtheorem{corollary}{Corollary}[section]
\begin{document}

\title{Bounding scalar curvature for global solutions of the K\"ahler-Ricci flow}

\author{Jian Song}

\address{Department of Mathematics, Rutgers University, Piscataway, NJ 08854}

\email{jiansong@math.rutgers.edu}

\thanks{Research supported in
part by National Science Foundation grants DMS-0847524 and DMS-0804095. The first named author is also supported in part by a Sloan Foundation Fellowship.}

\author{Gang Tian}

\address{BICMR and SMS, Peking University, Beijing, 100871, China}
\address{Department of Mathematics,
Princeton University, Princeton NJ 08544}

\email{tian@math.princeton.edu}

\begin{abstract} We show that the scalar curvature is uniformly bounded for the normalized K\"ahler-Ricci flow on a K\"ahler manifold with semi-ample canonical bundle. In particular, the normalized K\"ahler-Ricci flow has long time existence if and only if the scalar curvature is uniformly bounded, for K\"ahler surfaces, projective manifolds of complex dimension three, and for projective manifolds of all dimensions if assuming the abundance conjecture.

\end{abstract}

\maketitle

\section{Introduction}

Let $(X, g_0)$ be a K\"ahler manifold of $\dim_{\mathbb{C}}X = n\geq 2$ and $g_0$ a smooth K\"ahler metric. We consider the unnormalized K\"ahler-Ricci flow
\begin{equation}\label{unkrl}
\ddt{g} = - Ric(g), ~~~~  g|_{t=0} = g_0.
\end{equation}
By definition, $X$ is a minimal model if its canonical line bundle $K_X$ is nef. It is well-known \cite{Ts2, TiZha}  that the flow has a global solution on $X\times [0, \infty)$ if and only if the canonical line bundle $K_X$ is nef or equivalently, $X$ is a minimal model.  The abundance conjecture predicts that if the canonical line bundle $K_X$ over a projective manifold $X$ is nef, then it must be semi-ample, i.e., a sufficiently large power of $K_X$ is globally generated or base point free. In particular, the abundance conjecture holds for  projective manifolds of complex dimension no bigger than three \cite{Ka, M1, M2}. The aim of this paper is to investigate the behavior of the scalar curvature along the K\"ahler-Ricci flow on a K\"ahler manifold with semi-ample canonical line bundle.

\begin{theorem} \label{main1} Let $X$ be a K\"ahler manifold with $K_X$ being semi-ample, and $g(t)$ be the smooth global solution of the normalized K\"ahler-Ricci flow
\begin{equation}\label{nkrf}
\ddt{g} = - Ric(g)-g, ~~~~  g|_{t=0} = g_0.
\end{equation}
 Then there exists $C>0$ depending on $X$ and $g_0$, such that for all $t\in[0, \infty)$,
\begin{equation}\label{scbnd1}
|R(t)|_{L^\infty(X)} \leq C,
\end{equation}
where $R(t)$ is the scalar curvature of $g(t)$.

\end{theorem}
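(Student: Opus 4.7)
The plan is to reformulate the normalized K\"ahler-Ricci flow as a parabolic Monge-Amp\`ere equation adapted to the semi-ample structure of $K_X$, derive a pointwise identity relating the scalar curvature to the K\"ahler potential, and then close the estimate via parabolic maximum-principle arguments.

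Since $K_X$ is semi-ample, pluricanonical sections induce a surjective holomorphic map $\pi: X \to X_{can}$ onto the canonical model, of relative dimension $n - \kappa$ with $\kappa = \kod(X)$. I would fix a smooth semi-positive form $\chi = \pi^*\omega_{FS} \in c_1(K_X)$ and a smooth volume form $\Omega$ on $X$ with $-\ddbar \log \Omega = \chi$. Setting $\omega_t = \chi + e^{-t}(\omega_0 - \chi)$ as reference and $\omega(t) = \omega_t + \ddbar \varphi(t)$, the flow (\ref{nkrf}) is equivalent to the parabolic Monge-Amp\`ere equation
\begin{equation*}
\ddt{\varphi} = \log \frac{e^{(n-\kappa)t}(\omega_t + \ddbar \varphi)^n}{\Omega} - \varphi, \qquad \varphi|_{t=0} = 0.
\end{equation*}
Differentiating in $t$ and using $\tr_{\omega}(\partial_t \omega) = -R - n$ produces the pointwise identity
\begin{equation*}
R = -\partial_t^2 \varphi - \partial_t \varphi - \kappa.
\end{equation*}
Standard estimates of Song-Tian in the semi-ample setting give $\|\varphi\|_{L^\infty}, \|\partial_t \varphi\|_{L^\infty} \leq C$, so bounding $|R|$ is equivalent to bounding $|\partial_t^2 \varphi|$ uniformly.

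The lower bound $R \geq -n$ is standard: the normalized flow satisfies $(\partial_t - \Delta) R = |\ric|^2 + R \geq R^2/n + R$, and the maximum principle at the spatial minimum gives $R_{\min}(t) \geq \min(R_{\min}(0), -n)$. For the upper bound, I would apply the parabolic maximum principle to an auxiliary quantity combining $R$, $\partial_t \varphi$, $\varphi$, and $\tr_\omega \chi$. A natural test function is $H = R + A \partial_t \varphi - B \varphi$, or equivalently $H = -\partial_t^2 \varphi - A \tr_\omega \chi$, for suitable positive $A, B$. Using the evolution equation for $R$, a parabolic Chern-Lu type inequality for $\log \tr_\omega \chi$ (which exploits the semi-positivity of $\chi$), and the Monge-Amp\`ere equation itself, the aim is to derive a differential inequality that forces $H \leq C$ at its maximum; the scalar curvature bound then follows from the established $L^\infty$ bounds on $\varphi$, $\partial_t \varphi$, and the controlled quantity $\tr_\omega \chi$.

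I expect the main obstacle to be the degeneration of $\chi$: since $\chi = \pi^*\omega_{FS}$ is only semi-positive and vanishes along the fibers of $\pi$, the standard Chern-Lu bound for $\tr_\omega \chi$ holds cleanly only on the regular locus $\Xcr$ and degenerates both near the discriminant of $\pi$ and in the fiber directions where $\omega(t)$ collapses as $t \to \infty$. To extend the estimates across the singular fibers I would combine the maximum principle with the two-sided bound $C^{-1}\Omega \leq e^{(n-\kappa)t}\omega^n \leq C \Omega$ and a barrier or cutoff argument adapted to a neighborhood of the discriminant. Delivering a uniform upper bound on $R$ despite the $(n-\kappa)$-dimensional collapsing of $\omega(t)$, and the failure of $\chi$ to be strictly positive, is the heart of the problem.
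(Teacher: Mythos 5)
Your setup coincides with the paper's: the same reduction to the parabolic Monge--Amp\`ere equation with reference forms $\chi$ and $\Omega$, the correct pointwise identity $R=-\partial_t^2\varphi-\ddt{\varphi}-\kappa$ (consistent with the identity $R=-\Delta u - tr_{\omega}(\chi)$, $u=\ddt{\varphi}+\varphi$, that the paper actually uses), and the same treatment of the lower bound on $R$. The genuine gap is the upper bound, which is exactly where your proposal stops at the level of a plan, and the specific test functions you name would not close. Under (\ref{nkrf}) one has $(\ddt{}-\Delta)R=|Ric|^2+R$, and the term $|Ric|^2$ enters with the \emph{wrong} sign for an upper bound: at a maximum of $H=R+A\ddt{\varphi}-B\varphi$ the maximum principle gives $0\le |Ric|^2+R+A(\ddt{}-\Delta)\ddt{\varphi}-B(\ddt{}-\Delta)\varphi$, and none of the auxiliary terms (which only produce $tr_\omega(\omega_0-\chi)$, $tr_\omega(\omega_t)$ and bounded quantities) contributes a negative multiple of $|Ric|^2$ or $R^2$ to absorb it; the same obstruction defeats $H=-\partial_t^2\varphi-A\,tr_\omega(\chi)$ since $-\partial_t^2\varphi$ equals $R$ up to bounded terms. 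This is precisely why the paper does \emph{not} run a maximum principle on $R$ itself but instead proves a parabolic Cheng--Yau/Li--Yau estimate for $u$: first $|\nabla u|_g^2\le C$ via the quantity $\frac{|\nabla u|_g^2}{A-u}+tr_\omega(\chi)$, whose evolution supplies the good terms $-|\nabla\nabla u|_g^2-|\overline{\nabla}\nabla u|_g^2$ and $-|\nabla tr_\omega(\chi)|_g^2$ (the latter from the Schwarz lemma) needed to dominate the cross terms; and then $-\Delta u\le C$ via $K=-\frac{\Delta u}{A-u}+\frac{4|\nabla u|_g^2}{A-u}$, whose evolution yields the crucial quadratic negative term $-\frac{(\Delta u)^2}{A-u}$. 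The bound $R=-\Delta u-tr_\omega(\chi)\le -\Delta u\le C$ then follows from $tr_\omega(\chi)\ge 0$. Without this gradient-then-Laplacian scheme, or some substitute producing a quadratically negative term at the maximum, your differential inequality cannot force $H\le C$.

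Two secondary points. The two-sided bound on $\ddt{\varphi}$ in the collapsing regime $0<\kod(X)<n$ is not a ``standard estimate of Song--Tian'' to be quoted: the uniform lower bound is itself one of the new ingredients here (Proposition \ref{volbdp}), obtained by comparing $\varphi$ with a time-interpolated family $\Phi$ built from solutions of the static degenerate equations (\ref{smallpsi}) together with the $L^\infty$ estimates of \cite{DP, EGZ2}; you would need to supply this rather than cite it. Conversely, the difficulty you anticipate near the singular fibers, and the proposed barrier/cutoff construction, are unnecessary: every maximum principle in the proof is applied globally on the compact manifold $X$, and the degeneracy of $\chi$ is harmless because $tr_\omega(\chi)\ge 0$ everywhere and the parabolic Schwarz lemma for the semi-positive pullback form holds globally.
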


If we assume the abundance conjecture, then Theorem 1.1 holds for all projective minimal models, and it implies that the long time existence of the normalized K\"ahler-Ricci flow over a projective manifold is equivalent to the scalar curvature being uniformly bounded. In the case when $X$ is a minimal model of general type, i.e., $K_X$ is big and semi-ample, the uniform scalar curvature bound is proved by Zhang \cite{Z2}  and so the normalized K\"ahler-Ricci flow converges to the unique singular K\"ahler-Einstein metric with uniformly bounded scalar curvature.

When $0< \kod(X) < n$, a minimal model $X$ with semi-ample $K_X$ admits a Calabi-Yau fibration over its canonical model $X_{can}$,  it is shown in \cite{ST1, ST2}  that the normalized K\"ahler-Ricci flow collapses nonsingular Calabi-Yau fibres and the flow converges weakly to a generalized K\"ahler-Einstein metric $g_{can}$ on its canonical model $X_{can}$. In particular, on a dense Zariski open set of  $X_{can}$, such a canonical metric satisfies the generalized Einstein equation
\begin{equation}
Ric(g_{can}) = - g_{can} + g_{WP},
\end{equation}
where $g_{WP}$ is the Weil-Petersson metric induced from the Calabi-Yau fibration from $X$ to $X_{can}$. Then Theorem \ref{main1} shows that in this case, the normalized K\"ahler-Ricci flow collapses the Calabi-yau fibration with uniformly bounded scalar curvature. In particular, it improves the result in \cite{ST1}  for bounding the scalar curvature for the K\"ahler-Ricci flow on minimal elliptic surfaces.

When $\kod(X)=0$, $X$ is a Calabi-Yau manifold and it is proved in \cite{C} that the unnormalized K\"ahler-Ricci flow converges to the unique Ricci-flat K\"ahler metric exponentially fast.

After rescaling time and space simultaneously, we have the following immediate corollary from Theorem \ref{main1} for bounding the scalar curvature along the unnormalized K\"ahler-ricci flow.

\begin{corollary} \label{main2}

 Let $X$ be a K\"ahler manifold with $K_X$ being semi-ample, and $g(t)$ be the smooth global solution of the unnormalized K\"ahler-Ricci flow (\ref{unkrl}). Then there exists $C>0$ depending on $X$ and the initial K\"ahler metric, such that for all $t\in[0, \infty)$,
\begin{equation}\label{scbnd2}
|R(t)|_{L^\infty(X)} \leq C (1+t)^{-1}.
\end{equation}

\end{corollary}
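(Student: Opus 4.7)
The plan is to derive Corollary \ref{main2} from Theorem \ref{main1} by the standard simultaneous rescaling of time and space that converts the unnormalized K\"ahler-Ricci flow into the normalized one. Given a global solution $\tilde g(\tilde t)$ of (\ref{unkrl}) on $[0,\infty)$, I would introduce the change of variables $g(t) = e^{-t}\,\tilde g(\tilde t)$ with $\tilde t = e^{t}-1$, so that $g(0) = \tilde g(0) = g_0$ and $d\tilde t/dt = e^{t}$.

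A direct computation, using the scale-invariance of the Ricci tensor ($\ric(e^{-t}\tilde g) = \ric(\tilde g)$), yields
\[
\ddt{g} \;=\; -e^{-t}\tilde g \;+\; e^{-t}\cdot e^{t}\cdot\frac{\partial \tilde g}{\partial \tilde t} \;=\; -g - \ric(\tilde g) \;=\; -g - \ric(g),
\]
so $g(t)$ is the global solution of the normalized flow (\ref{nkrf}) with the same initial datum $g_0$. Theorem \ref{main1} then provides a constant $C>0$, depending only on $X$ and $g_0$, with $|R(g(t))|_{L^\infty(X)} \le C$ for all $t \in [0,\infty)$.

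Next, I would convert this back to a bound on $R(\tilde g)$. Under a pointwise rescaling $g = e^{-t}\tilde g$, the scalar curvature transforms as $R(g) = e^{t} R(\tilde g) = (1+\tilde t)\,R(\tilde g(\tilde t))$, so the bound from Theorem \ref{main1} translates into
\[
|R(\tilde g(\tilde t))|_{L^\infty(X)} \;\le\; \frac{C}{1+\tilde t}.
\]
Since $t\mapsto e^{t}-1$ is a bijection of $[0,\infty)$ onto itself, this inequality is valid for all $\tilde t \in [0,\infty)$, which is precisely (\ref{scbnd2}).

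There is essentially no substantive obstacle: all of the analytic content lives in Theorem \ref{main1}, and the corollary is a one-line consequence of the algebraic rescaling. The only steps that require any care are verifying the compatibility of the initial data (trivial at $t = \tilde t = 0$) and tracking the correct power of $(1+\tilde t)$ coming from the fact that scalar curvature scales as the inverse of the metric conformal factor; both are immediate.
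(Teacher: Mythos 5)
Your proposal is correct and is exactly the rescaling argument the paper invokes (the paper simply states that the corollary "follows from Theorem \ref{main1eq} immediately by rescaling time and space"); your computation of the change of variables $g(t)=e^{-t}\tilde g(e^t-1)$ and the resulting identity $R(g)=(1+\tilde t)R(\tilde g)$ is the intended one-line proof, carried out in full.
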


For $X$ of $\kod(X)>0$, (\ref{scbnd2}) is optimal in the sense that there exists $c>0$ such that for all $t\geq 0$, there exists $z_t\in X$ such that $|R(t, z_t)|  \geq c(1+t)^{-1}.$ This can be seen from the simple example $X=E\times C$ where $E$ is an elliptic curve and $C$ is a curve with  genus greater than $1$.

We also give a criterion for long time existence of the normalized K\"ahler-Ricci flow for K\"ahler surfaces and projective manifolds of dimension $3$.
\begin{corollary} \label{main3}

Let $X$ be a K\"ahler surface or a projective manifold of complex dimension $3$. Then the normalized K\"ahler-Ricci flow (\ref{nkrf}) on $X$ admits a global solution if and only if the scalar curvature is uniformly bounded in time.

\end{corollary}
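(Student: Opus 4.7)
The corollary reduces to combining Theorem \ref{main1} with two external inputs: the Tian--Zhang characterization of the maximal existence time of the K\"ahler-Ricci flow in terms of nefness of $K_X$, and the resolution of the abundance conjecture in dimensions two and three.

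For the direction ``global solution $\Rightarrow$ bounded scalar curvature,'' I would first observe that long-time existence forces the cohomology class
\[
[\omega(t)] \;=\; e^{-t}[\omega_0] + (1 - e^{-t})\,c_1(K_X)
\]
to remain in the K\"ahler cone for all $t \geq 0$, which via Tian--Zhang is equivalent to $K_X$ being nef. For a K\"ahler surface, nefness of $K_X$ promotes to semi-ampleness by the classical surface classification, while for a projective threefold the same conclusion follows from the abundance theorems of Kawamata and Miyaoka. In either case $K_X$ is semi-ample, and Theorem \ref{main1} yields $|R(t)|_{L^\infty(X)} \leq C$.

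For the converse, I would argue by contradiction. Let $T \in (0, \infty]$ be the maximal existence time of (\ref{nkrf}), and assume $|R(t)|_{L^\infty(X)}$ is uniformly bounded on $[0, T)$. If $T < \infty$, then the flow develops a finite-time singularity, and I would appeal to the result of Z.~Zhang that the scalar curvature of the K\"ahler-Ricci flow on a projective manifold must blow up as one approaches any finite-time singularity, i.e.\ $\limsup_{t \to T^-}\sup_X R(t) = +\infty$. This contradicts the hypothesis and forces $T = \infty$; note that since (\ref{nkrf}) and (\ref{unkrl}) are related by time rescaling, the Tian--Zhang criterion and Zhang's blow-up result transfer from the unnormalized to the normalized flow without complication.

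The main obstacle is precisely this last input. The forward implication is essentially a direct wrap-up of Theorem \ref{main1} together with the abundance conjecture in low dimension, but the converse is not contained in Theorem \ref{main1} and relies on knowing, independently, that bounded scalar curvature is incompatible with a finite-time singularity of the K\"ahler-Ricci flow. If one wished to avoid citing Zhang's blow-up theorem, the alternative would be to analyze the evolution $\partial_t \log \omega^n = -R - n$: bounded scalar curvature gives two-sided bounds on the ratio $\omega(t)^n / \omega_0^n$, hence a two-sided bound on the cohomological volume $\int_X \omega(t)^n = [\omega(t)]^n$, and one would then need to rule out degeneration of the class $[\omega(T)]$ on the K\"ahler cone boundary by a more refined control of intersection numbers with extremal curves. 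In either formulation, this is the step carrying the real content; the rest is bookkeeping.
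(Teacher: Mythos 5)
Your proposal is correct and follows essentially the same route as the paper: long-time existence forces $K_X$ nef by the Tian--Zhang cohomological characterization, nef promotes to semi-ample by surface classification respectively the Kawamata--Miyaoka abundance theorems, and Theorem \ref{main1} then bounds the scalar curvature. The paper's own proof is only this forward direction stated in one line; you correctly identify that the converse (bounded scalar curvature $\Rightarrow$ no finite-time singularity) is a separate input, supplied by Zhang's theorem \cite{Z3} that the scalar curvature must blow up at any finite-time singularity of the K\"ahler--Ricci flow, which the paper leaves implicit.
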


In general, it is natural to ask if the following holds  for the maximal solution of the unnormalized K\"ahler-Ricci flow on $X\times [0, T)$, where $X$ is a K\"ahler manifold and $T>0$ is the maximal existence time.

\begin{enumerate}

\item If $T<\infty$, then there exists $C>0$ such that $$-C\leq R(t)\leq C(T-t)^{-1}. $$

\item If $T=\infty$, then there exists $C>0$ such that $$|R(t) | \leq C (1+t) ^{-1}. $$

\end{enumerate}
In \cite{Pe2, SeT}, the answer to the first question is affirmative due to Perelman for the K\"ahler-Ricci flow  on Fano manifolds with finite time extinction. In \cite{Z3}, it is shown that if the K\"ahler-Ricci flow develops finite time singularity, the scalar curvature blows up at most of rate $(T-t)^{-2}$ if $X$ is projective and if the initial K\"ahler class lies in $H^2(X, \mathbb{Q})$.  One can even ask if the above estimates hold for the Ricci curvature along the unnormalized K\"ahler-Ricci flow.


\section{Volume estimates and parabolic Schwarz lemma}

Let $X$ be an $n$-dimensional  K\"ahler manifold with $K_X$ being semi-ample. Therefore the canonical ring $R(X, K_X)$ is finitely generated, and so the pluricanonical system $|mK_X |$ for sufficiently large $m\in \mathbb{Z}^+$, induces a holomorphic map
\begin{equation}\label{plurican}
\pi: X\rightarrow X_{can} \subset \mathbb{P}^N,
\end{equation}
where $X_{can}$ is the canonical model of $X$. The Kodaira dimension of $X$ is defined to be %
\begin{equation}
\kod(X) = \dim X_{can}.
\end{equation}
We always have
$$ 0\leq \kod(X) \leq \dim X =n. $$
In particular,
\begin{enumerate}

\item If $\kod(X) =n$, $X$ is birationally equivalent to its canonical model $X_{can}$, and $X$ is called a minimal model of general type.

\item If $0<\kod(X)<n$, $X$ admits a Calabi-Yau fiberation $$\pi: X \rightarrow X_{can}$$ over $X_{can}$ and a general fibre is a smooth Calabi-Yau manifold of complex dimension $n-\kod(X)$.

\item If $\kod(X)=0$, $X_{can}$ is a point and $X$ is a Calabi-Yau manifold with $c_1(X)=0$.

\end{enumerate}

Now we will reduce the normalized K\"ahler-Ricci flow to a parabolic Monge-Amp\`ere equation. Let $\mathcal{O}_{\mathbb{P}^N}(1)$ be the hyperplane bundle over $\mathbb{P}^N$ in (\ref{plurican}) and $\omega_{FS}\in [\mathcal{O}_{\mathbb{P}^N}(1)]$ be a Fubini-Study metric on $\mathbb{P}^N$. Then there exists $m>0$ such that $$m K_X = \pi^* \mathcal{O}_{\mathbb{P}^N}(1).$$  We define
$$\chi = \frac{1}{m} \pi^* \omega_{FS} \in [K_X]$$ and $\chi$ is a smooth nonnegative closed $(1,1)$-form on $X$. There also exists a smooth volume form $\Omega$ on $X$ such that $$Ric (\Omega)= -\ddbar \log \Omega = \chi.$$

Let $\omega_0$ be the initial K\"ahler metric of the normalized K\"ahler-Ricci flow (\ref{nkrf}) on $X$. Then K\"ahler class evolving along the normalized K\"ahler-Ricci flow is given by $$[\omega(t)] = (1-e^{-t}) [K_X] + e^{-t} [\omega_0]$$ and so $[\omega(t)]$ is a K\"ahler class for all $t\in [0, \infty)$. Therefore the normalized K\"ahler-Ricci flow starting with $\omega_0$ on $X$ has a smooth global solution on $X \times [0, \infty)$. We define the reference metric $$\omega_t = (1-e^{-t}) \chi + e^{-t} \omega_0.$$
 Then the Kahler-Ricci flow is equivalent to the following Monge-Ampere flow.
\begin{equation}\label{nmaeqn}
\ddt{\varphi} = \log \frac{ e^{(n-\kappa)t} (\omega_t + \ddbar \varphi)^n } {\Omega} - \varphi,
\end{equation}
where $\omega_t = \chi + e^{-t} ( \omega_0 - \chi)$ and $\kappa = \kod(X)$. In particular, $\chi=0$ when $\kod(X)=0$.

First we prove the following uniform estimates.

\begin{lemma} \label{c0}

There exists $C>0$ such that on $X\times [0, \infty)$,
\begin{equation}
\ddt{\varphi} \leq C
\end{equation}
and
\begin{equation}\label{c00}
|\varphi|  \leq C.
\end{equation}

\end{lemma}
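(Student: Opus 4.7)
The plan is to establish the three inequalities---$\ddt{\varphi}\le C$ and $|\varphi|\le C$---by a combination of parabolic maximum principle applied to (\ref{nmaeqn}) and a Ko\l odziej-type $L^\infty$ estimate for degenerate complex Monge-Amp\`ere equations. Write $\omegaphi := \omega_t + \ddbar\varphi$ throughout. For the upper bound on $\varphi$, at an interior maximum $(t_0,x_0) \in X\times(0,\infty)$ one has $\ddbar\varphi\le 0$ and $\ddt{\varphi}\ge 0$, so (\ref{nmaeqn}) yields $\varphi(t_0,x_0) \le \log(e^{(n-\kappa)t_0}\omega_{t_0}^n/\Omega)(x_0)$. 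The key observation is that $e^{(n-\kappa)t}\omega_t^n$ is uniformly bounded above on $X\times[0,\infty)$: expanding $\omega_t^n = ((1-e^{-t})\chi + e^{-t}\omega_0)^n$ binomially and using $\chi^{\kappa+1}\equiv 0$ (since $\chi = \pi^*\omega_{FS}/m$ is pulled back from the $\kappa$-dimensional variety $X_{can}$), only the terms $\chi^k \wedge \omega_0^{n-k}$ with $k\le\kappa$ survive, and the dominant contribution at large $t$ is the bounded form $\binom{n}{\kappa}\chi^\kappa \wedge \omega_0^{n-\kappa}$. Hence $\varphi\le C$.

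For the upper bound on $\ddt{\varphi}$, differentiating (\ref{nmaeqn}) in $t$ and using $\ddt{\omega_t} = \chi - \omega_t$ together with $\Delta_{\omegaphi}\varphi = n - \tr_{\omegaphi}\omega_t$, one obtains the evolution identity
\begin{equation*}
\left(\frac{\partial}{\partial t} - \Delta_{\omegaphi}\right)\!\ddt{\varphi} \;=\; (n-\kappa) + \tr_{\omegaphi}(\chi - \omega_t) - \ddt{\varphi}.
\end{equation*}
Applying the maximum principle to $G := \ddt{\varphi} - A\varphi$ with $A\ge 1$, and using the elementary comparison $\chi \le (1-e^{-t})^{-1}\omega_t \le 2\omega_t$ (valid for $t\ge\log 2$ directly from the definition of $\omega_t$), the term $(1-A)\tr_{\omegaphi}\omega_t \le 0$ absorbs into the parabolic inequality and at any interior maximum of $G$ in $X\times[\log 2,\infty)$ one deduces $(1+A)\ddt{\varphi}\le (n-\kappa) + An$, hence $\ddt{\varphi}\le n$ there. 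Combining with the two-sided $C^0$ bound on $\varphi$ and handling the short initial interval $[0,\log 2]$ by smoothness of the flow then gives $\ddt{\varphi}\le C$ globally.

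The remaining and hardest ingredient is the lower bound on $\varphi$: the direct maximum principle fails because $e^{(n-\kappa)t}\omega_t^n/\Omega$ vanishes along the critical locus of $\pi$, so the analogue of the argument used for the upper bound only produces $\varphi \ge -C$ on the Zariski open set where $\chi^\kappa$ is nondegenerate. I would instead rewrite (\ref{nmaeqn}) as the complex Monge-Amp\`ere equation
\begin{equation*}
(\omega_t + \ddbar\varphi)^n \;=\; e^{\ddt{\varphi}+\varphi-(n-\kappa)t}\,\Omega
\end{equation*}
and invoke Ko\l odziej's $L^\infty$ estimate in the form adapted to semi-ample (degenerate) reference classes by Eyssidieux-Guedj-Zeriahi and Zhang. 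The uniform total mass $\int_X e^{(n-\kappa)t}\omegaphi^n = e^{(n-\kappa)t}[\omega_t]^n \le C$ again follows from the same binomial expansion, and combined with an upper bound on $e^{\ddt{\varphi}+\varphi}$ (obtained by a preliminary maximum-principle argument on $\ddt{\varphi}+\varphi$) this yields the $L^p$-control needed to conclude $\varphi\ge -C$ with constants independent of $t$. The main obstacle is ensuring that the pluripotential constants do not deteriorate as $[\omega_t]\to[\chi]$---the limit class $[\chi]$ is only semi-ample, with $[\chi]^n = 0$ when $\kappa<n$---and that the $L^p$-control on the right-hand side is obtained without a circular dependence on the lower bound being proved.
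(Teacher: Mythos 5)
Your overall architecture is the same as the paper's (maximum principle for the two upper bounds, a Demailly--Pali/Eyssidieux--Guedj--Zeriahi $L^\infty$ estimate plus the lower bound on the total mass $e^{(n-\kappa)t}[\omega_t]^n$ to normalize $\sup_X\varphi$ and obtain $\varphi\ge -C$), and your volume comparison $e^{(n-\kappa)t}\omega_t^n\le C\Omega$ via $\chi^{\kappa+1}=0$ is exactly the paper's first step. However, there is a genuine circularity in the order of your three estimates. Your bound on $\ddt{\varphi}$ uses $G=\ddt{\varphi}-A\varphi$ with constant $A\ge 1$: at an interior maximum of $G$ you do get $\ddt{\varphi}\le n$, but to convert this into a global bound you must estimate $G(z_0,t_0)=\ddt{\varphi}(z_0,t_0)-A\varphi(z_0,t_0)$ from above, which requires the \emph{lower} bound $\varphi\ge -C$ --- precisely the estimate you defer to the last step. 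That last step in turn needs $\varphi+\ddt{\varphi}\le C$ to control the right-hand side of the Monge--Amp\`ere equation, and the ``preliminary maximum-principle argument on $\ddt{\varphi}+\varphi$'' you invoke does not close: one computes $\left(\ddt{}-\Delta\right)(\ddt{\varphi}+\varphi)=tr_\omega(\chi)-\kappa$, and $tr_\omega(\chi)$ has the wrong sign and is not yet controlled at this stage (the bound $tr_\omega(\chi)\le C$ is Proposition \ref{chibound}, which itself relies on the present lemma). So as written the three estimates each depend on one of the others.

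The paper breaks this circle with a weight you did not use: it first shows $|e^{-t}\varphi|\le C$ by the direct maximum principle --- the point being that the crude lower bound $\varphi\ge -C-\kappa t$ coming from $\omega_t^n\ge C^{-1}e^{-nt}\Omega$ (note this is a global bound, not one restricted to the locus where $\chi^\kappa$ is nondegenerate, so your diagnosis of why the direct argument fails is slightly off: the obstruction is the linear-in-$t$ decay, not degeneracy of $\pi$) is killed by the factor $e^{-t}$. It then runs the maximum principle on $\ddt{\varphi}-e^{-t}\varphi$ instead of $\ddt{\varphi}-A\varphi$; since $e^{-t}\varphi$ is already bounded on both sides, the value of this quantity at its maximum point is controlled without knowing $\inf\varphi$, and the upper bound for $\ddt{\varphi}$ follows. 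Only then, with $\varphi+\ddt{\varphi}\le C$ in hand, does the paper invoke the degenerate $L^\infty$/oscillation estimate together with $\int_X e^{\varphi+\ddt{\varphi}}\Omega=e^{(n-\kappa)t}[\omega_t]^n\ge C^{-1}$ (you only record the upper bound on this mass, but it is the lower bound that pins down $\sup_X\varphi\ge -C$) to conclude $|\varphi|\le C$. If you replace your auxiliary function $\ddt{\varphi}-A\varphi$ by $\ddt{\varphi}-e^{-t}\varphi$ and add the elementary two-sided bound on $e^{-t}\varphi$, your outline becomes the paper's proof.
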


\begin{proof} It is straightforward to show that there exists $C_1 >0$ such that $$ C_1^{-1} e^{  -n t}  \Omega  \leq \omega_t ^n \leq C_1 e^{-(n-\kappa) t } \Omega. $$ Then by applying the maximum principle for $\varphi$ and $e^{-t} \varphi$, there exists $C_2 >0$ such that on $X\times [0, \infty)$,
$$ \varphi \leq C_2, ~~~~~~~~~~~~  | e^{-t} \varphi |\leq C_2. $$

Straightforward calculations show that
\begin{equation}
\left(\ddt{} - \Delta\right) \ddt{\varphi} =  -e^{-t} tr_\omega (\omega_0 -\chi) - \ddt{\varphi} + (n-\kappa) .
\end{equation}
The uniform upper bound for $\ddt{\varphi}$ follows by applying  the maximum principle to $\ddt{\varphi} -  e^{-t}\varphi$  since there exists $C_3>0$ such that
\begin{eqnarray*}
 && \left(\ddt{} - \Delta\right) (\ddt{\varphi} -  e^{-t} \varphi) \\
 &=&  - e^{-t}  tr_\omega( \omega_t +\omega_0 -\chi  )  - (1+e^{-t}) \ddt{\varphi}  +  e^{-t} \varphi   + n e^{-t} +(n-\kappa) \\
 &\leq&  -  \ddt{\varphi} + C_3 .
 \end{eqnarray*}
Then we immediately obtain the uniform upper bound for $\ddt{\varphi}$. 

Now we will prove the lower bound for $\varphi$.  Rewrite the the parabolic Monge-Amp\`ere equation as 
$$ (\omega_t + \ddbar \varphi)^n = e^{ -(n-\kappa) t + \varphi + \ddt\varphi}\Omega. $$ Since $\omega_t  = (1-e^{-t}) \chi + e^{-t} \omega_0$ and there exists $C_4>0$ such that $C_4^{-1} \leq e^{(n-\kappa)t}  [\omega_t]^n \leq C_4$, $\varphi + \ddt\varphi \leq C_4$, by the results of \cite{DP, EGZ2}, there exists $C_5>0$ such that for all $t\in [0, \infty)$, 
\begin{equation}\label{c0estim}
 \sup_{z\in X}  \varphi(z, t)  - \inf_{z\in X} \varphi(z, t) \leq C_5.
 \end{equation}
 
  On the other hand, 

$$\int_X e^{\varphi + \ddt\varphi} \Omega = e^{(n-\kappa)t} [\omega_t]^n\geq C_4^{-1},$$ and so there exists $C_6>0$ such that for all $t\in [0, \infty)$, $$\sup_{z\in X} \left( \varphi+ \ddt\varphi\right) \geq - C_6. $$
Hence 
$$ \sup_{z\in X} \varphi(z, t) \geq \sup_{z\in X } \left( \varphi(z, t) + \ddt\varphi(z, t) \right) - \sup_{z\in X} \ddt\varphi(z, t)\geq  - \sup_{z\in X} \ddt\varphi(z, t) - C_6 $$ and so $\sup_{z\in X} \varphi(z, t)$ is uniformly bounded from below for all $t\in [0, \infty)$. Then (\ref{c00}) is proved by by applying the estimate (\ref{c0estim}).

\end{proof}

We now shall  prove a uniform  bound for  $\ddt \varphi$.  

\begin{proposition} \label{volbdp} There exists $C>0$ such that ,
\begin{equation}
\left| \ddt{\varphi} \right|_{L^\infty(X \times [0, \infty))}  \leq C.
\end{equation}
\end{proposition}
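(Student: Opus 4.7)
My plan is to apply the minimum principle to $\ddt{\varphi}$ directly, combined with a parabolic Aubin--Yau $C^2$-type estimate. From the proof of Lemma \ref{c0} we already have the identity
$$
\left(\ddt{}-\Delta\right)\ddt{\varphi} \;=\; -e^{-t}\tr_{\omega}(\omega_0-\chi) - \ddt{\varphi} + (n-\kappa).
$$
Fix $T>0$ and let $(x^*,t^*)$ realize the minimum of $\ddt{\varphi}$ on the compact set $X\times[0,T]$. If $t^*=0$ the value is controlled by the initial data; otherwise the spacetime minimum principle gives $\left(\ddt{}-\Delta\right)\ddt{\varphi}(x^*,t^*)\leq 0$, and rearranging the evolution equation at that point yields
$$
\ddt{\varphi}(x^*,t^*) \;\geq\; -e^{-t^*}\tr_{\omega}(\omega_0-\chi)|_{(x^*,t^*)} + (n-\kappa) \;\geq\; -C_1\, e^{-t^*}\tr_{\omega}(\omega_0)|_{(x^*,t^*)} + (n-\kappa),
$$
where $C_1$ depends only on the $\omega_0$-pointwise size of the fixed smooth form $\omega_0-\chi$ (both bounds $-C_1\omega_0\leq\omega_0-\chi\leq C_1\omega_0$ hold by compactness of $X$).

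The whole proposition is therefore reduced to the parabolic Schwarz / Aubin--Yau bound
$$
\tr_{\omega}(\omega_0) \;\leq\; C_2\, e^{t} \quad\text{on }\; X\times[0,\infty),
$$
equivalently $e^{-t}\tr_{\omega}(\omega_0)\leq C_2$. I would establish this by applying the maximum principle to the test quantity $\log\tr_{\omega}(\omega_0) - A\varphi$ for a sufficiently large constant $A$, using Yau's Schwarz-type inequality for the identity map $(X,\omega(t))\to(X,\omega_0)$ (which contributes a term of the form $K\tr_{\omega}(\omega_0)$, with $K$ an upper bound for the holomorphic bisectional curvature of $\omega_0$), and the $L^\infty$-bounds on $\varphi$ and $\ddt{\varphi}$ from Lemma \ref{c0} to absorb the cross terms. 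Plugging this back into the minimum-principle inequality above gives $\ddt{\varphi}(x^*,t^*)\geq -C_1 C_2 + (n-\kappa)$, and since $T$ is arbitrary this produces the uniform lower bound $\ddt{\varphi}\geq -C$ on $X\times[0,\infty)$, which together with the upper bound from Lemma \ref{c0} completes the proof.

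The principal obstacle I foresee is the sharpness of the exponent in the parabolic Schwarz bound. Because $\omega(t)$ collapses the Calabi--Yau fibers of $\pi:X\to X_{can}$ at rate $e^{-t}$ in the case $0<\kappa<n$, one expects $\tr_{\omega}(\omega_0)$ to grow like $e^{t}$ precisely in the fiber directions, so no improvement on this rate is available. The argument must therefore track this exponential factor very carefully through the choice of test quantity and the handling of the $-A\varphi$ correction; this is the only genuinely delicate step, as everything else in the plan is a standard minimum-principle application.
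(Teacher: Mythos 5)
The reduction in your first step is fine, but the estimate you reduce everything to --- $e^{-t} tr_{\omega}(\omega_0) \leq C_2$, i.e.\ $\omega(t) \geq c\, e^{-t}\omega_0$ --- is where the proposal breaks down, in two ways. First, the maximum principle applied to $\log tr_{\omega}(\omega_0) - A\varphi$ does not close: the Schwarz-lemma computation for the identity map into $(X,\omega_0)$ produces a bad term $+C\, tr_{\omega}(\omega_0)$ coming from the bisectional curvature of $\omega_0$, while the good term supplied by $-A\varphi$ is $-A\,tr_{\omega}(\omega_t)$, and since $\omega_t = (1-e^{-t})\chi + e^{-t}\omega_0$ only dominates $e^{-t}\omega_0$, this good term is $\geq -A e^{-t} tr_{\omega}(\omega_0)$ --- too weak by a factor $e^{-t}$ to absorb $C\,tr_{\omega}(\omega_0)$ uniformly in time. (This is exactly why the paper's Proposition~\ref{chibound} works for $\chi$, which \emph{is} uniformly dominated by $\omega_t$, but the same scheme cannot give a bound on $tr_{\omega}(\omega_0)$.) Second, the argument is circular: the evolution of $-A\varphi$ contributes $-A\,\ddt{\varphi}$, which requires a uniform \emph{lower} bound on $\ddt{\varphi}$ to control --- precisely the statement being proved; Lemma~\ref{c0} only supplies the upper bound. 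More structurally, $\omega(t)\geq c e^{-t}\omega_0$ is a genuine second-order estimate on the collapsing metric (equivalent, given the volume bound, to $\omega \leq C\omega_t$), which is strictly stronger than the zeroth-order volume bound of the proposition and is not available globally near the singular fibers of $\pi$ by any elementary maximum-principle argument.

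The paper proceeds quite differently and avoids any curvature term of a reference metric. It solves the family of elliptic Monge--Amp\`ere equations (\ref{smallpsi}), obtains uniform $L^\infty$ bounds on the solutions $\psi_s$ from the pluripotential estimates of \cite{DP, EGZ2}, splices them into a bounded spacetime barrier $\Phi$, and applies the maximum principle to $H = \ddt{\varphi} + 2\varphi - \Phi$. The only inequality needed for the trace term is the arithmetic--geometric mean inequality $tr_{\omega}(\alpha) \geq n\left(\alpha^n/\omega^n\right)^{1/n}$ with $\alpha = \chi + \omega_t + \ddbar\Phi$, whose determinant is bounded below by construction; no bound on $tr_{\omega}(\omega_0)$ is ever required. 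If you want to salvage your outline, you would need to replace the Schwarz bound on $tr_{\omega}(\omega_0)$ with a barrier of this kind.
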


\begin{proof}
It suffices to prove a uniform lower bound for $\ddt{\varphi}$ by Lemma \ref{c0}.   

First we consider the following family of Monge-Amp\`ere equations for $s\in [0, \infty)$, 
\begin{equation}\label{smallpsi}
 (\omega_s + \ddbar\psi_s)^n   =  e^{ \psi_s} e^{- (n-\kappa) s} \Omega,
\end{equation}
where $$\omega_s = \chi + e^{-s} (\omega_0 -\chi)   .$$ There exists a unique smooth solution $\psi_s$ for each $s\in [0, \infty)$ \cite{Y1, A}.
It is straightforward to show by the maximum principle, that $\psi_s$ is uniformly bounded above on $X$ for all $s\in [0, \infty)$, i.e., there exists $ C_1 >0$ such that $$\psi_s\leq C_1$$ for all $s\in[0, \infty)$. Also $$\int_X e^{\psi_s} \Omega = e^{(n-\kappa)s} [\hat\omega_s]^n $$ is uniformly bounded from below for all $s\in [0, \infty)$. Therefore there exists $C_2>0$ such that $$-C_2 \leq \sup_{X} \psi_s \leq C_2$$ for all $s\in [0, \infty)$.  

By the results due to \cite{DP, EGZ2}  for solutions of degenerate Monge-Amp\`ere equations, there exists $C_3>0$ such that $$|\psi_s|_{L^\infty(X)} \leq C_3$$  for all $s\in [0, \infty)$. 

Let $\rho=\rho(t)$ be a smooth decreasing function defined on $[0, 1]$ such that
\begin{equation}
\rho(t) = \left\{        \begin{array}{ll}
    1, &  t\in [0, \frac{1}{3}] \\
    0,  & t\in [\frac{2}{3}, 1] .\\
  \end{array}  \right.
\end{equation}
We now define a smooth function $\Phi(z, t)$ on $ X \times [0, \infty)$ as follows.
\begin{equation}\label{bigphi}
\Phi( z, t) = \rho(t-m)  \psi_{m+1} (z)+       (1- \rho(t-m)) \psi_{m+2} (z), ~~~~(z, t)\in X \times [m, m+1],
\end{equation}
where $m$ is any nonnegative integer.
Then $\Phi$ is smooth in $X\times [0, \infty) $ and $$ -C_3 \leq \Phi \leq C_3.$$

Consider the same family of Monge-Amp\`ere equations as in equation (\ref{smallpsi})  for all $s\in [1, \infty)$ and define $\Phi$ as in equation (\ref{bigphi}).  Now we consider
$$ H = \ddt\varphi +  2 \varphi - \Phi.$$
The evolution of $H$ is given by
$$ \left(\ddt{} - \Delta\right) H = 2 \ddt{\varphi} +  tr_\omega \left( \chi +  \omega_t  + \ddbar \Phi\right) - (n+\kappa) - \ddt{\Phi}, $$
and there exists $C_4>0$ such that,
$$ \left(\ddt{} - \Delta\right) H \geq 2 \log \frac{ e^{(n-\kappa)t} \omega^n }{\Omega} + tr_{\omega} ( \chi + \omega_t + \ddbar \Phi) - C_4.$$
It is straightforward to check that for all $t\in [m, m+1]$, 
$$\chi+ \omega_t \geq  \omega_{m+1}, ~~~\chi + \omega_t \geq  \omega_{m+2}.$$

Suppose $t\in [m, m+1]$ for some nonnegative integer $m$.  Then  either $\rho(t-m) \geq 1/2$ or $(1-\rho(t-m) )\geq 1/2$ and we have
\begin{eqnarray*}
\chi + \omega_t+ \ddbar \Phi 
&\geq & \frac{1}{2} \min\left(   \chi+ \omega_t + \ddbar\psi_{m+1} , \chi  + \omega_t + \ddbar \psi_{m+2} \right) \\
&\geq & \frac{1}{2} \min\left(   \omega_{m+1} + \ddbar\psi_{m+1} , \omega_{m+2} + \ddbar \psi_{m+2} \right) 
\end{eqnarray*}
and so there exist $C_3, C_4 >0$ such that
\begin{eqnarray*}
(\chi + \omega_t + \ddbar \Phi )^n &\geq&  2^{-n} \min\left(  (\omega_{m+1} + \ddbar \psi_{m+1})^n   , (\omega_{m+2}+ \ddbar \psi_{m+2})^n       \right) \\
&\geq &  C_5   e^{- (n-\kappa) (m+2)}  \Omega  \\
&=&  C_5 e^{-(n-\kappa) (m+2 - t)}  e^{-(n-\kappa) t} \Omega\\
&\geq& C_6 e^{-(n-\kappa) t } \Omega.
\end{eqnarray*}
Suppose that $$H(z_0, t_0) = \inf_{X\times [0, T]} H(z, t)$$ with $t_0 \in [m, m+1]\cap[0, T]$ for some nonnegative integer $m$.  By combining the above estimates, we have at $(z_0, t_0)$, 
\begin{eqnarray*}
\left(\ddt{}- \Delta \right) H &\geq&  2 \log \frac{ e^{ (n-\kappa) t_0 } \omega^n } {\Omega} + C_7 \left(  \frac{ (\omega_{t_0} + \ddbar \Phi)^n }{\omega^n}    \right)^{1/n}  - C_8  \\
&\geq& 2 \log \frac{ e^{ (n-\kappa) t_0 } \omega^n } {\Omega} + C_7 \left(  \frac{ e^{-(n-\kappa)t _0} \Omega }{\omega^n}    \right)^{1/n}  - C_8  \\
&\geq& C_{9}  \left(  \frac{ e^{-(n-\kappa)t_0 } \Omega }{\omega^n}    \right)^{1/n}  - C_{10}\\
& \geq & C_{11} e^{- H/n} - C_{10}, 
\end{eqnarray*}
where the last inequality follows from the fact that $\varphi$ and $\Phi$ are uniformly bounded.  
By the maximum principle, $H(z_0, t_0) \geq n \log (C_{10}^{-1} C_{11}) $ and so for all $(z, t) $, $$ H(z, t) \geq \min \left( \min_X H(z,0),    n \log (C_{10}^{-1} C_{11} )\right), $$
and so it is uniformly bounded below. The proposition follows then immediately because $\varphi$ and $\Phi$ are uniformly bounded.

\end{proof}

The following calculation for the parabolic Schwarz lemma is given in \cite{ST1, ST2}.

\begin{lemma}\label{swaz} Let $\omega= \omega(t)$ be the solution of the normalized K\"ahler-Ricci flow (\ref{nkrf}).  If $\kod(X)>0$, then there exists $C>0$ such that on $X \times [0, \infty)$,
\begin{equation}
\left(\ddt{} - \Delta\right) tr_{\omega}(\chi) \leq tr_{\omega}(\chi) + C \left( tr_{\omega}(\chi)\right)^2 - |\nabla tr_{\omega}(\chi)|^2_g,
\end{equation}
where $\Delta$ is the Laplace operator associated to the evolving metric $g(t)$.
\end{lemma}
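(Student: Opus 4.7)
The plan is a direct parabolic Schwarz-lemma computation in the spirit of Yau, specialized to the normalized K\"ahler-Ricci flow. Set $u = tr_{\omega}(\chi) = g^{i\bar{j}}\chi_{i\bar{j}}$. From the flow equation one has $\partial_t g^{i\bar{j}} = g^{i\bar{p}}g^{q\bar{j}}R_{q\bar{p}} + g^{i\bar{j}}$, and since $\chi$ is time-independent,
$$\partial_t u \,=\, g^{i\bar{p}}g^{q\bar{j}}R_{q\bar{p}}\chi_{i\bar{j}} \,+\, u.$$

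For the Laplacian, I would write $\Delta u = g^{k\bar{l}}g^{i\bar{j}}\chi_{i\bar{j};k\bar{l}}$ and use that $\chi = \frac{1}{m}\pi^{*}\omega_{FS}$ is a closed semipositive $(1,1)$-form, so $\chi_{i\bar{j};k} = \chi_{k\bar{j};i}$ and $\chi_{i\bar{j};\bar{l}} = \chi_{i\bar{l};\bar{j}}$. Commuting covariant derivatives via the Ricci identity and contracting with $g^{i\bar{j}}g^{k\bar{l}}$ produces two contributions: a Ricci piece $g^{i\bar{p}}g^{q\bar{j}}R_{q\bar{p}}\chi_{i\bar{j}}$ identical to the one appearing in $\partial_t u$, and a bisectional-curvature piece pulled back from the target. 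Since $\omega_{FS}$ on $\mathbb{P}^N$ has holomorphic bisectional curvature bounded above, the latter is dominated by $Cu^{2}$ for a uniform $C$.

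Subtracting, the Ricci contributions cancel in $(\partial_t - \Delta)u$ and one is left with $u + Cu^{2}$ plus a gradient contribution. The term $-|\nabla u|_g^{2}$ is produced by passing through the logarithm via $\Delta\log u = \Delta u/u - |\nabla u|_g^{2}/u^{2}$: the clean Schwarz bound is naturally a pointwise inequality for $\Delta \log u$, and re-expressing it for $\Delta u$ produces a negative gradient term of the stated form (after absorbing the factor of $u$ into the norm constant, as in \cite{ST1, ST2}). The main obstacle is the degeneracy of $\chi$ on the critical locus of $\pi$: where $u$ vanishes, the logarithmic computation is not directly valid. I would bypass this by regularizing $\chi$ to $\chi_{\epsilon} = \chi + \epsilon\omega_{0}$, carrying out the computation with constants uniform in $\epsilon$, and letting $\epsilon \to 0$; semipositivity of $\chi$ and strict positivity of $\omega_{0}$ make this regularization compatible with the bisectional-curvature control. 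Alternatively, one applies the inequality only at maximum points of composite quantities such as $\log u - A\varphi$, which automatically lie in $\{u>0\}$ for the intended applications.
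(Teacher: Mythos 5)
The paper offers no proof of this lemma at all --- it simply refers to \cite{ST1, ST2} --- and your computation is exactly the Chern--Lu/Yau parabolic Schwarz argument carried out there: the Ricci term coming from $\partial_t g^{i\bar j}$ cancels the Ricci term in the commutation formula for $\Delta\, tr_{\omega}(\chi)$, and the bisectional curvature of $\omega_{FS}$ pulled back through $\pi$ is controlled by $C\left(tr_{\omega}(\chi)\right)^2$. Two points, one of which is a real soft spot. First, the gradient term: the direct computation gives $\Delta\, tr_{\omega}(\chi)\geq R^{k\bar l}\chi_{k\bar l}+|\overline{\nabla}\partial\pi|^2_{g,h}-C\left(tr_{\omega}(\chi)\right)^2$, and Cauchy--Schwarz yields $|\nabla\, tr_{\omega}(\chi)|^2_g\leq tr_{\omega}(\chi)\cdot|\overline{\nabla}\partial\pi|^2_{g,h}$, so the honest output is $-|\nabla\, tr_{\omega}(\chi)|^2_g\,/\,tr_{\omega}(\chi)$ rather than $-|\nabla\, tr_{\omega}(\chi)|^2_g$. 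Your remark that the factor of $tr_{\omega}(\chi)$ can be ``absorbed into the norm constant'' is not legitimate at this stage: the uniform upper bound on $tr_{\omega}(\chi)$ is precisely what Proposition \ref{chibound} later deduces \emph{from} this lemma, so invoking it here would be circular. (The divided form is in fact what the proof of Proposition \ref{chibound} needs, and once $tr_{\omega}(\chi)\leq C$ is established the two forms agree up to a constant, which is all Section 3 uses; so this is as much an imprecision in the paper's statement as in your argument, but you should state the inequality in the form the computation actually delivers.) Second, the regularization $\chi_\epsilon=\chi+\epsilon\omega_0$ is unnecessary: since the evolution inequality is derived for $tr_{\omega}(\chi)$ itself and not for its logarithm, it holds across the degeneracy locus of $\pi$; at a zero of the nonnegative function $tr_{\omega}(\chi)$ its gradient vanishes, so nothing singular occurs there.
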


The following proposition is an improvement in \cite{ST1, ST2}. The fact that $\ddt\varphi$ is bounded below helps to get the Schwarz lemma and the lower bound of $\omega$ by $\chi$.

\begin{proposition} \label{chibound}  There exists $C>0$ such that on $X\times [0, \infty) $,
\begin{equation}
tr_\omega (\chi) \leq C.
\end{equation}

\end{proposition}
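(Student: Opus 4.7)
The plan is to apply the maximum principle to the auxiliary quantity
\[
u = \log \operatorname{tr}_\omega(\chi) - A\varphi
\]
for a sufficiently large constant $A > 0$, absorbing the bad quadratic term in Lemma \ref{swaz} against a negative contribution produced by $-A\Delta\varphi$.

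First I would compute $(\partial_t - \Delta) u$. From Lemma \ref{swaz} and the standard logarithmic derivative identity one gets
\[
\left(\ddt{} - \Delta\right) \log \operatorname{tr}_\omega(\chi) \leq 1 + C\operatorname{tr}_\omega(\chi) + \frac{|\nabla \operatorname{tr}_\omega(\chi)|^2_g}{(\operatorname{tr}_\omega(\chi))^2} - \frac{|\nabla \operatorname{tr}_\omega(\chi)|^2_g}{\operatorname{tr}_\omega(\chi)},
\]
and the last two gradient terms cancel (up to a harmless sign) once $\operatorname{tr}_\omega(\chi) \geq 1$. For $\varphi$, since $\omega = \omega_t + \ddbar\varphi$ and $\omega_t = (1-e^{-t})\chi + e^{-t}\omega_0 \geq (1-e^{-t})\chi$,
\[
\left(\ddt{} - \Delta\right) \varphi = \ddt{\varphi} - n + \operatorname{tr}_\omega(\omega_t) \geq \ddt{\varphi} - n + (1-e^{-t})\operatorname{tr}_\omega(\chi).
\]
Combining, for $t \geq \log 2$ (so that $1 - e^{-t} \geq 1/2$),
\[
\left(\ddt{} - \Delta\right) u \leq 1 + C\operatorname{tr}_\omega(\chi) - A\ddt{\varphi} + An - \tfrac{A}{2}\operatorname{tr}_\omega(\chi).
\]

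Next, I would invoke Proposition \ref{volbdp} to conclude $|\ddt{\varphi}| \leq C$, so the term $-A\ddt{\varphi}$ is bounded. This is precisely where the two-sided bound on $\ddt{\varphi}$ is essential: without a lower bound on $\ddt{\varphi}$ one could not absorb that term into a universal constant. Choosing $A = 4C$ gives
\[
\left(\ddt{} - \Delta\right) u \leq C' - C\operatorname{tr}_\omega(\chi)
\]
on $X \times [\log 2, \infty)$, for some constant $C' > 0$ depending only on $X$ and $g_0$.

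Finally, I would apply the maximum principle on $X \times [\log 2, T]$ for arbitrary $T$. At any interior maximum, $(\partial_t - \Delta) u \geq 0$ forces $\operatorname{tr}_\omega(\chi) \leq C'/C$, hence $u$ is bounded above there by $\log(C'/C) + A \|\varphi\|_\infty$. At $t = \log 2$, $u$ is bounded by smoothness of the flow on a compact time interval and the bound on $\varphi$ from Lemma \ref{c0}. Thus $u$ is uniformly bounded above on $X \times [\log 2, \infty)$, and combined with the bound on $\varphi$ this gives $\operatorname{tr}_\omega(\chi) \leq e^{u + A\varphi} \leq C$; the interval $[0, \log 2]$ is controlled by compactness. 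The main subtlety is the structural one already noted: having the term $-A(1-e^{-t})\operatorname{tr}_\omega(\chi)$ of the right sign and size to beat the $C\operatorname{tr}_\omega(\chi)$ from the Schwarz lemma, which works only because both $\varphi$ and $\ddt{\varphi}$ are uniformly bounded on all of $X \times [0,\infty)$.
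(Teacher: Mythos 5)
Your proposal is correct and follows essentially the same route as the paper: the paper sets $H=\log\operatorname{tr}_\omega(\chi)-A\varphi$, uses Lemma \ref{swaz} together with $-A\Delta\varphi = -An + A\operatorname{tr}_\omega(\omega_t)$ and $\omega_t\geq c\,\chi$ to absorb the quadratic term, and invokes the boundedness of $\ddt{\varphi}$ and $\varphi$ exactly as you do. Your handling of small $t$ by compactness versus the paper's implicit use of $\omega_t\geq c\chi$ for all $t$ is an immaterial difference.
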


\begin{proof} Let $H = \log tr_{\omega} \chi - A\varphi$. Applying Lemma \ref{swaz},  there exist $C_1, C_2, C_3>0$  such that
$$ \left(\ddt{}- \Delta \right) H \leq - tr_\omega ( A \omega_t - C_1 \chi ) - C_2 \leq - tr_\omega (\chi ) - C_2 \leq - C_3e^{-H} - C_2.$$
By the maximum principle, $H$ is uniformly bounded and the proposition follows immediately.

\end{proof}


\section{Gradient estimates}

In this section, we will make use of the volume bound and parabolic Schwarz lemma to prove a parabolic analogue of Yau's gradient estimate \cite{LiYa, ChY},  to bound the scalar curvature. The parabolic gradient estimate is applied by Perelman to bound the scalar curvature for the K\"ahler-Ricci flow on Fano manifolds \cite{Pe2, SeT}.  This is also an improvement of  in \cite{ST1, ST2}.

We consider the normalized parabolic Monge-Amp\'ere flow (\ref{nmaeqn}) and let $u= \ddt{\varphi} + \varphi$. Since both $\ddt{\varphi}$ and $\varphi$ are uniformly bounded, there exists $A>0$ such that $$A- u \geq 1. $$

\begin{proposition} There exists $C>0$ such that
 \begin{equation}\label{gradest}  |\nabla u|_g^2 \leq C, \end{equation}
\begin{equation} \label{lapest}  - \Delta u \leq C . \end{equation}

\end{proposition}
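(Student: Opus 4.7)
The plan is to deduce both estimates from parabolic maximum-principle arguments based on the two identities for $u = \ddt{\varphi} + \varphi$ which follow from differentiating (\ref{nmaeqn}):
\begin{equation*}
\ddbar u = -Ric(\omega) + \chi, \qquad \left(\ddt{} - \Delta\right) u = tr_\omega\chi - \kappa.
\end{equation*}
The first gives $\Delta u = -R + tr_\omega \chi$, so by Proposition \ref{chibound} the Laplacian bound (\ref{lapest}) is equivalent to an upper bound on the scalar curvature $R$, and the gradient bound (\ref{gradest}) concerns the spatial variation of a function whose evolution source $tr_\omega\chi - \kappa$ is already uniformly bounded.

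For the gradient bound (\ref{gradest}) I would compute $(\partial_t - \Delta)|\nabla u|^2$ by combining the K\"ahler Bochner formula with the evolution $\ddt{g^{i\bar j}} = R^{i\bar j} + g^{i\bar j}$; the Ricci contributions from Bochner and from the inverse metric evolution cancel, producing
\begin{equation*}
\left(\ddt{} - \Delta\right)|\nabla u|^2 = |\nabla u|^2 + 2 \operatorname{Re} \langle \nabla\, tr_\omega \chi, \nabla u\rangle - |\nabla\nabla u|^2 - |\nabla\dbar u|^2.
\end{equation*}
Following Perelman's strategy, I would then apply the maximum principle to $Q = |\nabla u|^2 / (A - u)$ with $A$ large enough that $A - u \geq 1$. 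A direct calculation shows that at a spacetime maximum of $Q$ the two $Q^2$ terms generated by $\nabla Q = 0$ cancel exactly, leaving a term linear in $Q$, the non-positive Hessian contribution, and the troublesome cross term $\langle \nabla\, tr_\omega \chi, \nabla u\rangle$. To dispose of the latter I would enlarge the barrier to $\tilde Q = |\nabla u|^2/(A - u) + B\, tr_\omega\chi$ for a large constant $B$: the parabolic Schwarz lemma (Lemma \ref{swaz}), together with the bound $tr_\omega\chi \leq C$ from Proposition \ref{chibound}, contributes a negative term $-B |\nabla\, tr_\omega\chi|^2$ to $(\partial_t - \Delta)\tilde Q$ which by Cauchy-Schwarz dominates the cross term. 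The maximum principle then forces $\tilde Q \leq C$, and hence (\ref{gradest}).

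For the Laplacian bound (\ref{lapest}) I would run a parallel argument, but the computation is subtler because the scalar curvature evolution $(\partial_t - \Delta) R = |Ric|^2 + R$ under the normalized K\"ahler-Ricci flow contains the reaction term $|Ric|^2$ with the wrong sign for a direct upper bound. The key observation is that $Ric = \chi - \ddbar u$ gives $|Ric|^2_g \leq 2|\chi|^2_g + 2|\ddbar u|^2_g$, and $|\chi|^2_g \leq (tr_\omega\chi)^2$ is bounded by Proposition \ref{chibound}, so the bad quadratic is reduced to $|\ddbar u|^2_g$. The latter can be absorbed by the negative Hessian term $-|\ddbar u|^2$ coming from Bochner applied to an auxiliary $|\nabla u|^2$ with a sufficiently large multiplicative constant. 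Concretely, I would consider a composite barrier $H = -\Delta u + C_1 |\nabla u|^2 + C_2\, tr_\omega\chi$ with $C_1, C_2$ chosen so that in $(\partial_t - \Delta) H$ the $|Ric|^2$ and $\Delta tr_\omega\chi$ contributions are dominated, the $\nabla\, tr_\omega\chi$ cross terms are absorbed via Lemma \ref{swaz} exactly as in the gradient step, and the already-established gradient bound controls the remaining lower-order terms, yielding $H \leq C$.

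The main obstacle in both parts is the cross term $\langle \nabla\, tr_\omega\chi, \nabla u\rangle$, forced on us by the non-constancy of $(\partial_t - \Delta) u = tr_\omega\chi - \kappa$; handling it is what compels the enlargement of the barriers by multiples of $tr_\omega\chi$ and the decisive use of the parabolic Schwarz lemma. In the Laplacian step there is the additional hurdle of the positive-sign $|Ric|^2$ reaction, which, unlike in Perelman's Fano situation where the $W$-entropy supplies the scalar curvature bound, we tame by hand via the Monge-Amp\`ere identity $Ric = \chi - \ddbar u$ to trade $|Ric|^2$ for $|\ddbar u|^2$, absorbed against the Bochner Hessian of an auxiliary $|\nabla u|^2$.
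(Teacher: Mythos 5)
Your proposal is correct and follows essentially the same route as the paper: both estimates are obtained by the maximum principle applied to Perelman-type barriers $|\nabla u|^2/(A-u)$ and $-\Delta u$ augmented by an auxiliary multiple of $|\nabla u|^2$, with the troublesome $\nabla\, tr_\omega\chi$ and $\Delta\, tr_\omega\chi$ terms absorbed by the $-|\nabla tr_\omega\chi|^2$ term supplied by the parabolic Schwarz lemma and the bad $|Ric|^2$-type reaction traded for $|\overline\nabla\nabla u|^2$ via $Ric = \chi - \ddbar u$ and dominated by the Bochner Hessian term. The only cosmetic differences are that the paper divides the Laplacian barrier by $A-u$ and feeds $tr_\omega\chi$ into the second step through a direct bound on $-\Delta\, tr_\omega\chi$ rather than adding $C_2\, tr_\omega\chi$ to the barrier.
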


\begin{proof} The proof is adapted from the calculations in \cite{ST1, ST2}. We assume that $\kod(X)>0$. When $\kod(X)=0$,  the proof of the proposition follows the same way since $\chi=0$ and it is in fact simpler.

First we note that
$$\left(\ddt{} - \Delta\right) u = tr_\omega(\chi) -\kappa. $$
The evolution for $|\nabla  u|_g^2$ and $\Delta u$ are given as below where $|\nabla u|_g^2= g^{i\bar j}u_i u_{\bar j}$ and $g=g(t)$ is the evolving metric associated to $\omega(t)$.
\begin{equation}
\left( \frac{\partial}{\partial t}-\Delta \right) |\nabla u|_g^2=|\nabla
u|_g^2+(\nabla
tr_{\omega}(\chi)\cdot\overline{\nabla}u+\overline{\nabla}
tr_{\omega}(\chi)\cdot\nabla u)-|\nabla\nabla
u|_g^2-|\overline{\nabla}\nabla u|_g^2,
\end{equation}
\begin{equation}
\left( \frac{\partial}{\partial t}-\Delta \right) \Delta u=\Delta
u+g^{i\overline{l}}g^{k\overline{j}}R_{k\overline{l}}u_{i\overline{j}}+\Delta
tr_{\omega}(\chi).
\end{equation}
Let $$ H = \frac{ |\nabla u|_g^2 }{A- u } + tr_\omega(\chi).$$ Then we have
\begin{eqnarray*}
&& \left( \ddt{} - \Delta \right) H\\
&=&  \left( \frac{|\nabla u|_g^2-|\nabla \nabla
u|_g^2-|\overline{\nabla}\nabla u|_g^2+(\nabla
tr_{\omega}(\chi)\cdot\overline{\nabla}u+\overline{\nabla}
tr_{\omega}(\chi)\cdot\nabla u)}{A-u} \right) \\
&&\\
&&  - \epsilon  \left( \frac{\nabla|\nabla
u|_g^2\cdot\overline{\nabla}u}{(A-u)^2}+\frac{\overline{\nabla}|\nabla
u|_g^2\cdot\nabla u}{(A-u)^2} \right) - 2 \epsilon  \frac{ | \nabla
u|_g^4}{(A-u)^3}- \frac{ 2(1-\epsilon) }{ A - u } Re \left( \nabla H \cdot \overline{\nabla} u  \right)  \\
&&\\
&& + \frac{2(1-\epsilon) }{ A- u } Re \left( \nabla u \cdot \overline{ \nabla } \left( tr_{\omega} (\chi)  \right) \right)   + ( tr_{\omega} (\chi) -\kappa ) \frac{ |\nabla u|_g^2 }{ (A- u)^2} + \left( \ddt{} -\Delta \right) tr_{\omega} ( \chi )
\end{eqnarray*}
Since $A-u$, $(A-u)^{-1}$, $tr_\omega(\chi)$ are uniformly bounded, by applying Lemma \ref{swaz}, Proposition \ref{volbdp}, Proposition \ref{chibound} and Schwarz inequality a few times, there exist $C_1, C_2>0$ depending on $\epsilon>0$ such that
\begin{eqnarray*}
&& \left( \ddt{} - \Delta \right) H \\
&\leq &  \left( \frac{|\nabla u|_g^2-|\nabla \nabla
u|_g^2-|\overline{\nabla}\nabla u|_g^2+(\nabla
tr_{\omega}(\chi)\cdot\overline{\nabla}u+\overline{\nabla}
tr_{\omega}(\chi)\cdot\nabla u)}{A-u} \right) \\
&&\\
&&  - \epsilon  \left( \frac{\nabla|\nabla
u|_g^2\cdot\overline{\nabla}u}{(A-u)^2}+\frac{\overline{\nabla}|\nabla
u|_g^2\cdot\nabla u}{(A-u)^2} \right) - 2 \epsilon  \frac{ | \nabla
u|_g^4}{(A-u)^3}- \frac{ 2(1-\epsilon) }{ A - u } Re \left( \nabla H \cdot \overline{\nabla} u  \right)  \\
&&\\
&& + \frac{2(1-\epsilon) }{ A- u } Re \left( \nabla u \cdot \overline{ \nabla } \left( tr_{\omega} (\chi)  \right) \right)   + ( tr_{\omega} (\chi) -\kappa ) \frac{ |\nabla u|_g^2 }{ (A- u)^2}  -  |\nabla tr_\omega(\chi)|^2 + C_1\\
&\leq& - C_2 \epsilon  |\nabla u|_g^4 - \frac{ 2- 2\epsilon}{A- u} Re (\nabla H\cdot \overline\nabla u) + C_3.
\end{eqnarray*}
By applying the maximum principle at $(z_0, t_0)$, for $H(z_0, t_0) = \max_{X\times [0, t]} H(t, z)$,  $H(z_0, t_0)$ is uniformly bounded and so is $H(z, t)$ on $X\times[0, \infty)$.  Then inequality (\ref{gradest}) follows immediately.

\medskip

We shall now prove inequality (\ref{lapest}).  Let $$K = - \frac{ \Delta u}{ A- u} + \frac{ 4 |\nabla u|_g^2}{A-u},  $$  then the evolution for $K$ is given by
\begin{eqnarray*}
&& \left( \ddt{} - \Delta \right) K \\
&&\\
&=&  \left(  \frac{ 4|\nabla u|_g^2-\Delta u - 4|\nabla \nabla u|_g^2 -3 | \nabla \overline{\nabla} u|_g^2 + g^{i\overline{j}} g^{k \overline{l}} \chi_{i\overline{j}} u_{ k \overline{ l}} - \Delta tr_{\omega} (\chi) + 8 Re \left( \nabla tr_{\omega} (\chi) \cdot \overline{\nabla} u \right)} { A - u } \right)  \\
&&\\
&&   + 4( tr_{\omega}(\chi)-\kappa) \frac{ |\nabla u|_g^2}{(A-u)^2}  - \frac{ 2 }{A-u} Re \left( \nabla K \cdot \overline{ u } \right) -  \left( tr_{\omega} (\chi) - \kappa \right) \frac{\Delta u}{ (A-u)^2}.
\end{eqnarray*}
\medskip
On the other hand, we have
$$ R_{i\bar j} = - u_{i\bar j} - \chi_{i\bar j}$$
and from Lemma \ref{swaz} and Proposition \ref{chibound}, there exists $C_4, C_5>0$ such that
\begin{eqnarray*}
&& - \Delta tr_{\omega}(\chi)\\
&=& \left(\ddt{} - \Delta\right) tr_\omega(\chi) -   \ddt{ } tr_\omega(\chi) \\
&=&  \left(\ddt{} - \Delta\right) tr_\omega(\chi) -  g^{i \overline{l}} g^{k\overline{j}} R_{k\overline{l}}  \chi_{ i \overline{j}} -   tr_\omega(\chi) \\
&=& \left(\ddt{} - \Delta\right) tr_\omega(\chi) + g^{i \overline{l}} g^{k\overline{j}} (u_{k\overline{l}} + \chi_{k \overline{l}} ) \chi_{ i \overline{ j }} -  tr_\omega(\chi)  \\
&\leq&  |\overline{\nabla} \nabla u|_g^2 - C_4| \nabla tr_\omega(\chi)|^2 + C_5 .
\end{eqnarray*}
Combining the above estimates with inequality (\ref{gradest}) and applying Schwarz inequality, there exists $C_6>0$ such that
\begin{eqnarray*}
 && \left(\ddt{}- \Delta \right) K  \\
 &\leq& - \frac{ |\nabla \overline\nabla u|_g^2}{ A- u} - \frac{ 2}{A- u} Re( \nabla K\cdot \overline \nabla u) + C_6\\
 &\leq& - \frac{(\Delta u)^2}{A-u}  - \frac{ 2}{A- u} Re( \nabla K\cdot \overline \nabla u)  + C_6.
 \end{eqnarray*}
 By applying the maximum principle at $(z_0, t_0)$ for $K(z_0, t_0) = \max_{X\times [0, t]} K(z, t)$,  $K(z_0, t_0)$ is uniformly bounded and so is $K (z, t)$ on $X\times[0, \infty)$.  Then inequality (\ref{lapest}) follows immediately.

\end{proof}


\section{Proof of the main theorems}

The scalar curvature $R(t)$ along the normalized K\"ahler-Ricci flow (\ref{nkrf}) can be expressed by
\begin{equation} \label{scalex}
R(t) = - \Delta u - tr_\omega(\chi).
\end{equation}

Now we can prove  Theorem \ref{main1}.

\begin{theorem} \label{main1eq} There exists $C>0$ such that on $X\times [0, \infty)$,
$$|R(t)| \leq C.$$

\end{theorem}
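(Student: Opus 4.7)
The plan is to prove the two-sided bound by combining the scalar curvature formula (\ref{scalex}) with the estimates already established in the paper, plus a classical maximum principle argument for the lower bound.

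For the upper bound, the work is essentially done: by (\ref{scalex}) we have
\[
R(t) \;=\; -\Delta u \;-\; tr_\omega(\chi).
\]
Since $\chi$ is a smooth nonnegative closed $(1,1)$-form on $X$, we have $tr_\omega(\chi)\ge 0$, so $R(t)\le -\Delta u$. The proposition from Section 3 yields $-\Delta u\le C$ uniformly on $X\times[0,\infty)$, and therefore $R(t)\le C$. This is clean and uses no further estimates.

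For the lower bound, I would invoke the standard evolution of the scalar curvature along the normalized K\"ahler-Ricci flow. A direct computation from $\ddt g_{i\bar j}=-R_{i\bar j}-g_{i\bar j}$ (differentiating $R=g^{i\bar j}R_{i\bar j}$ and using that $\partial_t R_{i\bar j}=\partial_i\dbar_j R$, which follows from $g^{k\bar l}\partial_t g_{k\bar l}=-R-n$) gives
\[
\left(\ddt{} - \Delta\right) R \;=\; |Ric|^2 + R.
\]
Combined with the pointwise inequality $|Ric|^2 \ge R^2/n$, this yields
\[
\left(\ddt{} - \Delta\right) R \;\ge\; \frac{R}{n}(R+n).
\]
Applying the parabolic minimum principle (or a barrier comparison with the ODE $f'=\tfrac{f}{n}(f+n)$, whose only attracting equilibria relevant here are $0$ and $-n$), one concludes that $R(\cdot,t)\ge \min\!\bigl(\inf_X R(\cdot,0),\,-n\bigr)$ uniformly in $t$. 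Together with the upper bound above, this gives $|R(t)|\le C$ on $X\times[0,\infty)$, which is the statement.

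The upper bound really is the content of the paper, since it depends on the careful gradient/Laplacian estimates for $u$ built up in Sections 2--3 (in particular $-\Delta u\le C$, which in turn required Proposition \ref{volbdp} on $|\dot\varphi|$, Proposition \ref{chibound} on $tr_\omega\chi$, and the Schwarz lemma); the lower bound, by contrast, is elementary and classical. So the only step worth writing up carefully is a one-line derivation of the upper bound from (\ref{scalex}) and the Section 3 proposition, followed by a brief invocation of the maximum principle for the lower bound.
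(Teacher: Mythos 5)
Your proof is correct and follows essentially the same route as the paper: the upper bound comes from $R=-\Delta u-tr_\omega(\chi)\le-\Delta u\le C$ using the Laplacian estimate of Section 3 and the nonnegativity of $\chi$, while the lower bound is the standard maximum-principle argument that the paper simply cites as well known. The only difference is that you write out the evolution equation $(\partial_t-\Delta)R=|Ric|^2+R$ and the resulting barrier argument explicitly, which is a harmless (and correct) elaboration.
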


\begin{proof} It is a well-known fact by the maximum principle that the scalar curvature $R(t)$ is uniformly bounded from below. Hence it suffices to prove a uniform upper bound for $R(t)$, and it follows immediately from equation (\ref{scalex}) and  equation (\ref{lapest}).

\end{proof}

Corollary \ref{main2} follows from Theorem \ref{main1eq} immediately by rescaling time and space.

\bigskip

The abundance conjecture holds for projective manifolds of dimension no bigger than three and by the classification of complex surfaces, the canonical line bundle of a K\"ahler surface is always semi-ample if it is nef.  Therefore the canonical line bundle is semi-ample for K\"ahler surfaces and three dimensional projective manifolds of nef canonical line bundle.  Corollary \ref{main3} then follows from Theorem \ref{main1eq}.

\bigskip
\bigskip

\noindent{\bf Acknowledgements} The first named author would like to thank D.H. Phong, J. Sturm, V. Tosatti and members of the complex geometry and PDE seminar at Columbia University. He also thanks B. Guo and V. Datar for a number of useful conversations.

\bigskip
\bigskip

\end{document}